\documentclass[reqno,final,a4paper]{amsart}
\usepackage{color}
\definecolor{darkgray}{gray}{0.9}
\usepackage{amsmath, amssymb, amsthm, thmtools}
    \usepackage[colorlinks=true,
            linkcolor=darkgray,
            citecolor=darkgray,
            urlcolor=darkgray]{hyperref}
\usepackage{mathrsfs}
\usepackage{mathtools}
\usepackage[linesnumbered,ruled,vlined]{algorithm2e}

\SetCommentSty{mycommfont}
\SetKwInOut{Input}{input}\SetKwInOut{Output}{output}

\usepackage[noadjust]{cite}
\usepackage[noabbrev,capitalize,nameinlink]{cleveref}
\crefname{equation}{}{}
\usepackage{fullpage}
\usepackage{graphics}
\usepackage{tikzscale}
\usepackage{pifont}
\usepackage{tikz}
\usetikzlibrary{fit, backgrounds, shapes, calc}
\usepackage{pgfplots}
\pgfplotsset{compat=1.12}
\usepackage{bbm}
\usepackage[T1]{fontenc}
\usetikzlibrary{arrows.meta}

\usepackage{environ}
\usepackage{framed}
\usepackage{url}
\usepackage{thm-restate}

\usepackage[noend]{algpseudocode}
\usepackage[labelfont=bf]{caption}
\usepackage{framed}
\usepackage[framemethod=tikz]{mdframed}
\usepackage{appendix}
\usepackage{graphicx}
\usepackage[textsize=tiny]{todonotes}
\usepackage{tcolorbox}
\usepackage{xcolor}
\usepackage[shortlabels]{enumitem}
\crefformat{enumi}{#2#1#3}
\crefrangeformat{enumi}{#3#1#4 to~#5#2#6}
\crefmultiformat{enumi}{#2#1#3}
{ and~#2#1#3}{, #2#1#3}{ and~#2#1#3}
\allowdisplaybreaks

\usepackage{hyperref}
\usepackage{nameref}

\makeatletter
\newcommand{\namedlabel}[2]{%
    \phantomsection
    \def\@currentlabelname{#2}%
    \label{#1}%
}
\makeatother

\usepackage{ifthen}
\numberwithin{equation}{section}

\newtheorem{theorem}{Theorem}[section]
\newtheorem{proposition}[theorem]{Proposition}
\newtheorem{lemma}[theorem]{Lemma}

\crefname{claim}{Claim}{Claims}

\newtheorem{corollary}[theorem]{Corollary}
\newtheorem{conjecture}[theorem]{Conjecture}
\newtheorem*{question*}{Question}

\theoremstyle{definition}

\newtheorem*{definition*}{Definition}

\newtheorem*{fact*}{Fact}

\crefname{fact}{Fact}{Facts}

\theoremstyle{remark}

\DeclareMathOperator{\Aut}{Aut}

\DeclareMathOperator{\parent}{parent}
\DeclareMathOperator{\Bin}{Bin}
\DeclareMathOperator{\inj}{inj}

\newcommand{\eps}{\varepsilon}

\renewcommand{\Pr}{\mathbb P}

\newlength{\entropynamewidth}

\title{Optimal bounds on spanning tree embeddings}
\author{Csongor Beke}\address{(C. Beke) University of Cambridge}\email{cb2138@cam.ac.uk}
\author{Vladimir Bo\v{s}kovi\'c}\address{(V. Bo\v{s}kovi\'c) Alfr\'ed R\'enyi Institute of Mathematics}\email{boskovic@renyi.hu}
\author{Nina Kam\v{c}ev}\address{(N. Kam\v{c}ev) Faculty of Science, University of Zagreb}\email{nina.kamcev@math.hr}
\author{Yiting Wang}\address{(Y. Wang) Institute of Science and Technology Austria}\email{yiting.wang@ist.ac.at}

\date{\today}

\begin{document}

\begin{abstract}
We prove that the number of labelled embeddings of any $n$-vertex tree $T$ into an $n$-vertex graph $G$ of maximum degree $d$ satisfies
\[
    \inj(T,G) \leq (d/e)^n \exp(o_d(1)n).
\]
The bound is sharp up to determining $o_d(1)$, even for paths, and the dependence of the error $\exp(o_d(1)n)$ on $d$ is necessary.
As an immediate corollary, we obtain an optimal anticoncentration bound for the isomorphism class of a uniformly random spanning tree in a connected $d$-regular graph, answering a conjecture of H.~Lee~\cite{Lee}. The proof combines Br\'egman's inequality with entropy methods.
\end{abstract}

\maketitle

\section{Introduction}

\subsection{Counting spanning tree embeddings}
How many spanning trees does a connected graph $G$ contain? Write $\tau(G)$ for this quantity.
For complete graphs, Cayley's formula~\cite{Cayley} gives $\tau(K_n)=n^{n-2}$. 
For a general connected graph $G$, Kirchhoff's matrix-tree theorem~\cite{kirchhoff1847ueber} determines $\tau(G)$ from the spectrum of the Laplacian matrix of $G$.
For connected $d$-regular graphs on $n$ vertices, the work of McKay~\cite{McKay} and Alon~\cite{Alon} gives the asymptotic estimate $\tau(G)=d^n\exp(-o_d(1)n)$. A more explicit error term can be extracted from a result of Kostochka~\cite{kostochka1995number}.

We study a related question: given a tree $T$ on $n$ vertices, how many embeddings of $T$ does a connected graph $G$ on $n$ vertices contain? Recall that an \emph{embedding} is simply an injective map $V(T)\rightarrow V(G)$ that preserves adjacency. Write $\inj(T,G)$ for this number. 
We are interested in upper bounds on $\inj(T,G)$, uniform over $T$, for $d$-regular graphs $G$.

Embedding the vertices successively from an arbitrarily chosen root gives the elementary bound $\inj(T,G)\leq nd^{n-1}$. On the other hand, $\inj(P_n,K_n)=n!=(n/e)^n\exp(O(\log n))$. This suggests saving a factor of approximately $e$ per vertex over the greedy bound. Our main result achieves this saving in any graph of maximum degree at most $d$, up to an error in the exponent that is small when $d$ is large.

\begin{theorem}\label{thm:main}
Let $d\geq 2$ be an integer, let $G$ be an $n$-vertex graph with $\Delta(G)\le d$, and let $T$ be an $n$-vertex tree. Then
\[
    \inj(T,G)\leq\left(\frac de\right)^n
    e^{o_d(1)n},
\]
where the error term $o_d(1)$ can be taken to be $C(\log^2d/d)^{1/3}$ for some universal constant $C>0$.
\end{theorem}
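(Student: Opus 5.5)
Since $\log \inj(T,G)$ is the entropy of a uniformly random embedding $\phi$ of $T$ into $G$, the plan is to bound that entropy by $n\log(d/e)+o_d(1)n$. Fix a root $r$ of $T$ and an ordering of $V(T)$ in which each vertex comes after its parent, and reveal $\phi$ in this order. The chain rule gives
\[
\log\inj(T,G)=H(\phi)\le \log n+\sum_{v\neq r} H\bigl(\phi(v)\mid \phi(u): u \text{ before } v\bigr).
\]
The greedy bound uses only that $\phi(v)$ lies in $N(\phi(\parent v))$, which gives $\log d$ per vertex. The saving of $\log e$ per vertex has to come from injectivity: neighbours of $\phi(\parent v)$ that are already used are unavailable. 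This is exactly where Br\'egman's inequality enters.

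For vertices $v$ whose parent has few children, I would use the random-ordering trick of Radhakrishnan's entropy proof of Br\'egman. Choose the order uniformly at random among orders compatible with the tree, or as a uniformly random linear extension, or by assigning each vertex an independent uniform time in $[0,1]$ and processing in increasing time with parents first. When $v$ is revealed, the number of still-available neighbours of $\phi(\parent v)$ is at most $d$ minus the number of those neighbours already used. A vertex $w\in N(\phi(\parent v))$ is already used if $\phi^{-1}(w)$ came earlier. Since $\phi$ is a bijection onto $V(G)$ (as $|T|=|G|=n$), every neighbour of $\phi(\parent v)$ is the image of some tree vertex. The key observation is therefore that every vertex of $G$ is eventually filled, so each of the $d$ neighbours of $\phi(\parent v)$ is hit at a random time. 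The count of available options then behaves like a uniform random variable on roughly $\{1,\dots,d\}$, and averaging $\log$ of it gives $\frac1d\sum_{k\le d}\log k\approx \log(d/e)$, as in Br\'egman. The difficulty is that the times are not independent: a tree vertex's time must exceed its parent's, so the neighbours of $\phi(\parent v)$ that are images of descendants or ancestors of $v$ are correlated with $v$.

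I would handle the correlations by truncation. First, vertices of $T$ with many children, say a parent with more than $\sqrt d$ or $d^{2/3}$ children, are few in number. Alternatively, for such a parent the children jointly choose an injective tuple from at most $d$ neighbours, which already gives the saving $\log(d!)/d$ per child by direct counting. Second, for a typical vertex $v$, I would argue that only $O(\text{small})$ of the $d$ neighbours of $\phi(\parent v)$ are images of vertices in the subtree of $v$ or on its ancestral path within bounded distance. The remaining neighbours' preimages have times that are nearly uniform and independent of $v$'s time, after conditioning on the position of $\phi$ restricted to a local window. Choosing the window size and the degree thresholds as powers of $d$ and $\log d$, and optimising them, should produce the error $C(\log^2 d/d)^{1/3}$.

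The main obstacle is this decorrelation step. Concretely, I must show that for most $v$ the conditional distribution of the number of used neighbours is close to uniform, even though the ordering must respect ancestry and the preimages of neighbours of $\phi(\parent v)$ may lie deep in the same branch. My first approach would be to replace the global order by random independent times $t_v$ and to process a vertex once all its ancestors have been processed. Tree-distance $\ge k$ preimages would then contribute essentially independent Beta-like times, and the at most $O(\text{poly}\, d)$ exceptional neighbours at close tree distance would cost a negligible amount after averaging over $v$. A counting argument would show that such close coincidences occur for only $o_d(1)n$ vertices.
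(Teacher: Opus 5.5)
Your proposal has a genuine gap, at the step you yourself flag as the main obstacle.

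\textbf{Why the random ordering does not survive.} Radhakrishnan's argument is a statement about each \emph{fixed} matching: averaging over a uniformly random order makes the rank of $v$ among the preimages of its neighbourhood uniform. Once you require $\phi(\parent v)$ to be revealed before $\phi(v)$, the order must be a linear extension of the rooted tree, and for path-like trees almost no randomness remains. For $T=P_n$ rooted at an end the order is unique. With independent times, a vertex at depth $D$ is effectively processed at the running maximum of the times on its ancestral path, which is concentrated near $1$ and far from uniform.

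\textbf{What a forced order gives.} For $T=P_n$, each edge of $G$ counts only towards the options of the step after its earlier endpoint. So the only general constraint on the numbers $a_k$ of available options is $\sum_k a_k\le e(G)\le dn/2$. This yields $\log(d/2)$ per vertex, not $\log(d/e)$, and the bound is attained. For even $d$, take $G=P_n^{d/2}$, the $(d/2)$-th power of a path, and $\phi$ the identity Hamilton path. For every root and every compatible order, all but $O(d)$ steps have exactly $d/2$ options. So any argument that controls each embedding separately is stuck at $\log(d/2)>\log(d/e)$.

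\textbf{The decorrelation claim is false.} Let $G$ be a row of copies of $K_d$, consecutive copies joined by a single edge with distinct endpoints, so $\Delta(G)\le d$. Each joining edge is a bridge, so every Hamilton path traverses each clique in one contiguous segment. Hence for \emph{every} $v$, all in-clique neighbours of $\phi(\parent v)$ are images of vertices within tree distance $d$ of $v$, not just for $o_d(1)n$ vertices.

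\textbf{The high-degree truncation also fails.} Parents with at least $d^{2/3}$ children are few, but their children can make up almost all of $T$. Counting injective tuples for $c$ children gives
$\frac1c\log\frac{d!}{(d-c)!}\ge\log(d-c+1)$
per child, which is no saving unless $c$ is close to $d$. The saving for such children must come from the other neighbours of $\phi(\parent v)$ being occupied by vertices elsewhere in $T$, which is global information.

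\textbf{The missing idea.} The paper decouples the order of revelation from the tree order by encoding the embedding as perfect matchings indexed by host vertices, so Br\'egman's inequality applies with no ancestry constraint.
\begin{itemize}
\item It peels off $q-1\approx(d\log d)^{1/3}$ layers of leaves. The remaining core $L_0$ has at most $n/q$ leaves and is handled through the successor map of a path decomposition (\cref{lem:core-embeddings}).
\item Given $\Phi_{<i}$ and the image set $S_i$, each layer $L_i$ is a perfect matching in the bipartite graph $H_i$ between $L_i$ and $S_i$.
\item Applying Br\'egman on the $S_i$ side with the true degrees $R_i(x)$ gives $n\log(d/e)$ plus the term $\mathbb E\sum_{i,x}\mathbf 1_{\{x\in S_i\}}\log\frac{R_i(x)+\theta^{-1}}{d}$.
\item That term pays for the entropy of the image partition $\mathcal P$, via Gibbs' inequality and $\sum_{i,x}R_i(x)\le dn$.
\item Because the $R_i(x)$ are random, this is done conditionally on a sparse random fingerprint $Z$ of marked parent images, whose entropy is only $\theta n\log(4q/\theta)$.
\end{itemize}
Balancing this cost against the core error and the binomial correction is what produces $C(\log^2 d/d)^{1/3}$. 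Your plan has no substitute for either the host-indexed matchings or the partition-entropy payment. The proposed optimisation of window sizes therefore has no concrete mechanism behind it.
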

Throughout the introduction, \emph{sharp} or \emph{asymptotically optimal} refers to the leading exponential term. This is also the level of precision of several counting results discussed below. The example $\inj(P_n,K_n)=n!$ shows that even for paths, our bound is optimal in this sense. More generally, the result is optimal throughout the range $d\gg \log n$, as witnessed by the random graph $G(n,p)$. First, note that for $G\sim G(n,p)$ and \textit{any} $n$-vertex tree $T$, we have $\mathbb E\,\inj(T,G)=n!p^{n-1} =\left(\frac{pn}{e}\right)^n e^{o(n)}.$
If $p\gg \log n/n$, then typically $\Delta(G)=(1+o(1))pn$, while a result of Glebov and Krivelevich~\cite{glebov2013hamilton} (see also Janson~\cite{janson1994numbers}) implies that the number of Hamilton paths is $(\Delta(G)/e)^n\exp(o(n))$ with high probability.
Thus, the leading exponential term in \cref{thm:main} is also attained by non-complete host graphs.
Moreover, the dependence of the error term on $d$ is necessary; see \cref{sec:concluding} for details.

Our proof combines Brégman's inequality with entropy methods. Entropy allows us to average the contributions of partial embeddings and their extensions, avoiding worst-case estimates. The main new ingredient is a \emph{sparse fingerprint}, which records a small amount of randomly sampled information about the embedding. Conditioning on this fingerprint gives us sufficiently precise control over the extension counts to obtain the sharp bound, while incurring only a lower-order entropy cost. 

\subsection{Anticoncentration of random spanning trees}
Our result has a direct application to the anticoncentration of random spanning trees.
For a connected graph $G$ on $n$ vertices, define
\[
    \rho(G) \coloneqq \max_{\substack{n\text{-vertex}\\ \text{tree } T}} \Pr(T\cong \mathcal T),
\]
where $\mathcal T$ is chosen uniformly among the spanning trees of $G$. In other words, $\rho(G)$ is the largest probability of any single isomorphism class. We seek upper bounds on this quantity, using the inequality $\rho(G) \leq \max_T \inj(T,G)/\tau(G)$\footnote{In fact, $\rho(G)= \max_T \inj(T,G)/(|\Aut(T)|\cdot \tau(G))$, where $\Aut(T)$ is the automorphism group of $T$. Here we are using the trivial bound $|\Aut(T)|\geq 1$.}.

This problem was first studied by H.~Lee~\cite{Lee}, who proved that $\rho(G)\leq \exp(-\Omega(n))$ for connected almost $d$-regular graphs $G$ on $n$ vertices with $d$ sufficiently large. Bitonti, Michel and Scott~\cite{BitontiMichelScott} proved the bound $\rho(G)\leq n^{-\Omega(d)}$ for connected graphs $G$ of minimum degree $d$ on $n$ vertices with $d$ sufficiently large and $n$ sufficiently large compared to $d$. Both results are sharp up to the constants in the exponents.

Combining \cref{thm:main} with the Alon--McKay estimate for $\tau(G)$ yields the following corollary.
\begin{corollary}\label{cor:anticoncentration}
    For $d\geq 2$, let $G$ be a connected $n$-vertex $d$-regular graph. Then
    \[
        \rho(G)\leq \exp(-(1-o_d(1))n)\,.
    \]
\end{corollary}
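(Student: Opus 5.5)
The corollary follows directly from \cref{thm:main} combined with the Alon--McKay lower bound on the number of spanning trees.

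Fix a connected $d$-regular graph $G$ on $n$ vertices and any $n$-vertex tree $T$. Each spanning tree of $G$ isomorphic to $T$ is the image of at least one embedding of $T$ into $G$. Hence the number of spanning trees isomorphic to $T$ is at most $\inj(T,G)$, and
\[
\Pr(\mathcal T\cong T)\le \frac{\inj(T,G)}{\tau(G)}.
\]

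\textbf{Numerator.} Since $\Delta(G)=d$, \cref{thm:main} gives $\inj(T,G)\le (d/e)^n e^{\eps_1(d)n}$, where $\eps_1(d)=C(\log^2 d/d)^{1/3}\to 0$. This bound is uniform in $T$.

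\textbf{Denominator.} By the McKay--Alon estimate quoted in the introduction, every connected $d$-regular graph satisfies $\tau(G)\ge d^n e^{-\eps_2(d)n}$, with $\eps_2(d)\to 0$ as $d\to\infty$. Since the paper quotes this estimate as $d^n\exp(-o_d(1)n)$, the $o_d(1)$ term must hold uniformly in $n$. It is important that this is a lower bound valid for every connected $d$-regular graph, not only asymptotically in $n$. Alon's result gives $\tau(G)\ge (d(1-\eps_2(d)))^n$ for all such $G$ up to polynomial or constant factors, and these can be absorbed into the exponent. Kostochka's result gives an explicit version of the same bound.

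\textbf{Combining.} Dividing the two bounds gives
\[
\Pr(\mathcal T\cong T)\le e^{-n}e^{(\eps_1(d)+\eps_2(d))n}=\exp(-(1-o_d(1))n).
\]
Taking the maximum over $T$ yields the claim.

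\textbf{Small $d$.} For small $d$, such as $d=2$, the statement is only meaningful with $o_d(1)$ interpreted as a function bounded by some constant. In that case the bound holds trivially once that function is allowed to be at least $1$, since $\rho(G)\le 1$. So nothing more is needed for small $d$.

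The only point requiring care is the uniformity of the spanning tree lower bound in $n$, and this is supplied by the cited results.
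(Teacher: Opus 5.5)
Your argument is correct and is exactly the paper's: you bound $\rho(G)\le\max_T \inj(T,G)/\tau(G)$ using the surjection from embeddings onto spanning trees isomorphic to $T$, then divide the bound of \cref{thm:main} by the Alon--McKay lower bound $\tau(G)\ge d^n e^{-o_d(1)n}$, which Alon's theorem gives for every connected $d$-regular graph, uniformly in $n$. Your hedge about polynomial factors is harmless: since $n\ge d+1$, any fixed power $n^{c}$ satisfies $c\log n/n\le c\log(d+1)/(d+1)=o_d(1)$, so such a factor is absorbed into the exponent.
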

This confirms Lee's conjecture~\cite[Conjecture 4.3]{Lee}, also reiterated in~\cite{Wang}.
This result is asymptotically optimal up to determining $o_d(1)$. Indeed, as already observed in~\cite{Lee}, there are connected $d$-regular graphs with many Hamilton paths that match this bound.

\subsection{Related results}
Sharp counting bounds for other spanning structures have been studied extensively. A classical example is Br\'egman's resolution of Minc's conjecture~\cite{Bregman1973}, which bounds the number of perfect matchings in a balanced bipartite graph in terms of the degrees on one side. Br\'egman's inequality is a recurring ingredient in our proof. We recall its statement.
\begin{theorem}[Br\'egman's inequality~\cite{Bregman1973}]\label{thm:bregman}
    Let $G$ be a balanced bipartite graph on vertex set $A\cup B$. Then the number of perfect matchings in $G$ is at most 
    \[
        \prod_{v\in A} (d(v)!)^{1/d(v)}\,.
    \]
\end{theorem}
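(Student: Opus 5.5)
We follow Radhakrishnan's entropy proof. If $G$ has no perfect matching there is nothing to prove, so assume it has at least one. Let $\sigma$ be a uniformly random perfect matching of $G$, viewed as a bijection $\sigma\colon A\to B$ with $v\sigma(v)\in E(G)$ for all $v\in A$. Then $H(\sigma)=\log M$, where $M$ is the number of perfect matchings. The goal is to show $H(\sigma)\le \sum_{v\in A}\frac{1}{d(v)}\log d(v)!$.

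Fix any ordering $\pi$ of $A$. Since $\sigma$ is determined by the sequence $(\sigma(v))_{v\in A}$, the chain rule gives
\[
H(\sigma)=\sum_{v\in A} H\bigl(\sigma(v)\,\big|\,(\sigma(u))_{u\prec_\pi v}\bigr).
\]
Given the values $\sigma(u)$ for $u\prec_\pi v$, the vertex $\sigma(v)$ must be a neighbour of $v$ not already used. The number of such vertices is
\[
N_v^\pi(\sigma)=d(v)-\bigl|\{u\prec_\pi v:\ \sigma(u)\in N(v)\}\bigr|.
\]
This count is a function of the conditioning. Bounding conditional entropy by the expected log of the support size, we get
\[
H\bigl(\sigma(v)\mid(\sigma(u))_{u\prec_\pi v}\bigr)\le \mathbb E_\sigma \log N_v^\pi(\sigma).
\]

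Since the identity holds for every $\pi$, we may now take $\pi$ uniformly at random, independent of $\sigma$, and average:
\[
\log M\le \sum_{v\in A}\mathbb E_\sigma\,\mathbb E_\pi \log N_v^\pi(\sigma).
\]
The key step is to compute the inner expectation for a fixed perfect matching $\sigma$. Because $\sigma$ is a bijection, the $d(v)$ neighbours of $v$ in $B$ are matched to $d(v)$ distinct vertices $\sigma^{-1}(N(v))\subseteq A$, and $v$ is one of them. The quantity $N_v^\pi(\sigma)$ is exactly $d(v)$ minus the number of these vertices preceding $v$ in $\pi$. Under a uniform random order, the rank of $v$ within this $d(v)$-set is uniform, so $N_v^\pi(\sigma)$ is uniform on $\{1,\dots,d(v)\}$. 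Therefore
\[
\mathbb E_\pi \log N_v^\pi(\sigma)=\frac{1}{d(v)}\sum_{k=1}^{d(v)}\log k=\frac{\log d(v)!}{d(v)}.
\]
This holds for every $\sigma$, so summing over $v\in A$ and exponentiating gives the bound.

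The only real obstacle is recognising that a fixed order yields no good bound, while averaging over a random order makes each $N_v$ exactly uniform on $\{1,\dots,d(v)\}$. Everything else is the chain rule and the support bound for entropy.
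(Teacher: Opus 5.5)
Your proposal is correct and complete: it is exactly Radhakrishnan's entropy proof, in which averaging over a uniformly random ordering makes each $N_v^\pi(\sigma)$ uniform on $\{1,\dots,d(v)\}$. The paper does not prove this theorem itself but quotes it and points to this same entropy argument \cite{Radhakrishnan1997}, so your route is the one the paper relies on.
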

We use the convention $(0!)^{1/0} = 1$.
The corresponding bound for general (non-bipartite) graphs was shown by Kahn and Lov\'asz (unpublished), as well as Friedland~\cite{Alon-Friedland,friedland2008upper,cutler2011entropy}. 
In Dirac graphs, Cuckler and Kahn~\cite{Cuckler-Kahn-Entropy} established sharp upper bounds on the number of perfect matchings and Hamilton cycles.

\subsubsection{Extremal counting problems.}
The perfect-matching results above belong to a broader family of extremal subgraph-counting problems in graphs with a given density or maximum degree. In the hypergraph setting, Luria~\cite{Luria-n-queens} obtained asymptotic upper bounds on the number of perfect matchings in regular uniform hypergraphs with small codegrees.
Perfect matchings in graphs can also be viewed as $K_2$-factors. Very recently, Lee~\cite{LeeFactors} obtained an asymptotically optimal upper bound on the number of $F$-factors, whenever $F$ is a fixed Hamiltonian graph.
Under a maximum-degree constraint, Chase~\cite{Chase2020} resolved a conjecture of Gan, Loh and Sudakov~\cite{GanLohSudakov2015} by determining the maximum number of triangles in an $n$-vertex graph of maximum degree at most $d$; the extremal graph is a disjoint union of copies of $K_{d+1}$ and one possibly smaller clique.

A more distant comparison is Sidorenko's conjecture~\cite{sidorenko1991inequalities}\footnote{For simplicity, we restrict to dense host graphs.}. For a \textit{fixed} bipartite graph $H$, what is the asymptotic \textit{minimum} of $\inj(H, G)$ over all $n$-vertex graphs of given density $p>0$? Sidorenko proved that when $H$ is a fixed tree, this asymptotic minimum is attained by the random graph $G(n,p)$, and conjectured that the same holds for any bipartite graph $H$. The conjecture remains unresolved despite significant effort -- see~\cite{conlon18-sidorenko,szegedy14} and the references therein.

\subsubsection{Number of distinct spanning trees}
We call two spanning trees \emph{distinct} if they are non-isomorphic, and write $\tau_u(G)$ for the maximum size of a collection of pairwise distinct spanning trees in $G$.
Otter~\cite{Otter} obtained precise asymptotic estimates for $\tau_u(K_n)$, but determining $\tau_u(G)$ is a difficult problem in general, partially because the size of the automorphism group can vary greatly for different spanning trees.
Anticoncentration results such as~\Cref{cor:anticoncentration} give lower bounds on $\tau_u(G)$. Indeed, observe that $\tau_u(G)\geq \rho(G)^{-1}$.
For $(n,d,\lambda)$-graphs satisfying mild spectral assumptions, the fourth author~\cite{Wang} recently proved a lower bound on $\tau_u(G)$ stronger than any obtainable through anticoncentration. Yan and Zhang~\cite{Yan-Zhang} obtained a sharp lower bound on $\tau_u(G)$ for connected graphs of minimum degree $d$ for fixed $d\geq 3$ and sufficiently large $n$.

\subsubsection{Entropy method}
Lastly, we briefly introduce the entropy method, which is the backbone of the proof.
Introduced by Shannon~\cite{Shannon1948} in 1948, entropy measures the average amount of information carried by a random variable, or equivalently, the uncertainty about its value before it is revealed. Its combinatorial applications go back at least to the work of Erd\H{o}s and R\'enyi~\cite{ErdosRenyi1963} on coin-weighing problems. 
Entropy has a natural connection with counting. Proving upper bounds with entropy leads to a fruitful line of research, with results including 
Br\'egman's inequality~\cite{Radhakrishnan1997} (non-bipartite analogue~\cite{cutler2011entropy}), counting hypergraph matchings, $n$-queens configurations~\cite{Luria-n-queens, Simkin-n-queen} and block designs~\cite{Linial-Luria-STS,keevash2018counting}. Entropy has also been used constructively to prove lower bounds, notably in several cases of Sidorenko's conjecture~\cite{szegedy14}.
For comprehensive backgrounds on the entropy method, we refer the readers to the excellent surveys by Galvin~\cite{GalvinEntropy} and by  Samotij~\cite{SamotijEntropy}.

\subsection{Paper organisation}
In~\Cref{sec:overview}, we give an overview of the proof and state some basics of the entropy method. In~\Cref{sec:auxiliary}, we collect various auxiliary results.~\Cref{sec:proof} contains the proof of~\Cref{thm:main}. In~\Cref{sec:concluding}, we prove the necessity of the dependence on $d$ in the theorem and conclude with open problems.

\section{Proof overview}\label{sec:overview}
Recall that greedy embedding gives $\inj(T,G)\le nd^{n-1}$.
Our aim is to save a factor of approximately $e$ per vertex.

We first motivate the proof by considering the case where $T$ is a path. We count directed Hamilton paths in $G$, first fixing their source $s$ and target $t$. There are at most $n^2$ choices for these vertices. Each such path $P$ determines a perfect matching $M_P$ in the auxiliary bipartite graph $\Sigma$ where the vertex classes are copies of $V(G)\setminus\{t\}$ and $V(G)\setminus\{s\}$, and two vertices are adjacent whenever their corresponding vertices are adjacent in $G$: to construct $M_P$, simply match each vertex other than $t$ to its successor on the path $P$. Since $\Sigma$ has maximum
degree at most $d$, Br\'egman's inequality (\Cref{thm:bregman}) gives $\inj(P_n,G)\leq n^2(d!)^{(n-1)/d}\approx (d/e)^n$.

Decomposing a tree with few leaves into paths gives a similar bound, with a small additional error; see \cref{lem:core-embeddings}. For a general tree $T$, we repeatedly remove all leaves, stopping after
$q-1$ rounds or when at most two vertices remain, where $q$ tends
slowly to infinity with $d$. Indexing the removed layers in reverse order gives
\[
    T=L_0\cup L_1\cup\cdots\cup L_h,
\]
where $h<q$ and $L_0$ either has at most $n/q$ leaves or at most $2$ vertices. Each vertex of $L_i$, for $i\geq 1$, has a unique neighbour in the tree $L_0\cup\cdots\cup L_{i-1}$, which we call its \textit{parent}.

To count embeddings of $T$, first fix the image sets $S_0,\ldots,S_h$ of these layers. We embed $L_0$ into $G[S_0]$ and then extend the embedding to $L_1,\ldots,L_h$ in this order. Once the earlier layers have been embedded, extensions to $L_i$ correspond to perfect matchings in a bipartite graph $H_i$ with vertex classes $L_i$ and $S_i$, where $v\in L_i$ is adjacent to $x\in S_i$ if the image of its parent is adjacent to $x$ in $G$. Thus vertices with the same parent have identical neighbourhoods in $H_i$.

Each vertex of $L_i$ has degree at most $d$ in $H_i$, so Br\'egman's inequality bounds the number of extensions by roughly $(d/e)^{|L_i|}$. The corresponding argument for $L_0$ gives the same leading contribution, $(d/e)^{|L_0|}$, with a small error term. Multiplying these bounds gives roughly $(d/e)^n$, as required.

Choosing the image partition may, however, introduce an additional exponential factor. To \emph{pay} for this choice, we apply Brégman’s inequality on the $S_i$ side, retaining the actual degrees. Although these degrees can exceed $d$, their total admits a useful bound. Entropy makes the resulting cancellation explicit.

\subsection{The entropy calculation}
Throughout the paper, all logarithms are natural. For a discrete random variable $X$, its \emph{entropy} is defined by
\[
    H(X)=-\sum_x \mathbb P(X=x)\log\mathbb P(X=x),
\]
with the convention $0\log 0=0$. We use standard facts that the entropy of a random variable supported on a finite set $S$ is at most $\log |S|$, with equality for the uniform distribution, as well as the chain rule, subadditivity and their conditional versions; see~\cite{SamotijEntropy} for background on entropy methods. The additional inequalities needed below are collected in Section~\ref{sec:auxiliary}. 

Let $\Phi$ be a uniformly random embedding of $T$ into $G$, so that $H(\Phi)=\log\inj(T,G)$. Our task is therefore to bound $H(\Phi)$.

We reveal $\Phi$ in the order suggested above: first its image
partition $\mathcal P=(S_0,\ldots,S_h)$, where $S_i=\Phi(L_i)$,
and then the embedding of each successive layer. Write
$\Phi_i=\Phi|_{L_i}$ and $\Phi_{<i} = \Phi|_{L_0\cup\cdots\cup L_{i-1}}$.
The \emph{conditional entropy} $H(\Phi_i\mid\mathcal P,\Phi_{<i})$ measures the average uncertainty in $\Phi_i$ after the partition and earlier embeddings are revealed. Since $\mathcal P$ is determined by $\Phi$, the \emph{chain rule} gives
\begin{equation}\label{eq:overview-chain}
    H(\Phi)=H(\mathcal P)
    +\sum_{i=0}^h H(\Phi_i\mid\mathcal P,\Phi_{<i}),
\end{equation}
where for $i=0$ we condition only on $\mathcal P$. Each conditional entropy is at most the expected logarithm of the number of extensions consistent with the revealed information, which can be bounded using Br\'egman's inequality.

To bound $H(\mathcal P)$, note that the partition is determined by the layer containing each host vertex. Writing $p_i(x)=\mathbb P(x\in S_i)$, the \emph{subadditivity of entropy} gives
\begin{equation}\label{eq:overview-partition}
    H(\mathcal P)\leq
    -\sum_{x\in V(G)}\sum_{i=0}^h p_i(x)\log p_i(x).
\end{equation}

We first explain the cancellation in an idealised setting, replacing
each factor $(r!)^{1/r}$ in Brégman's inequality by $r/e$ and omitting the additional error from embedding the \textit{core} $L_0$. For $i\ge1$, let $R_i(x)$ count the vertices of $L_i$ whose parents have images adjacent to $x$. 
This is defined for every $x\in V(G)$ and equals $d_{H_i}(x)$ when $x\in S_i$.
For the core, set $R_0(x)=|N_G(x)\cap S_0|$. Each vertex of $T$ contributes at most $d$ to the total sum, so
\begin{equation}\label{eq:overview-weight-sum}
    \sum_{x\in V(G)}\sum_{i=0}^h R_i(x)\leq dn.
\end{equation}
Br\'egman's inequality and \eqref{eq:overview-chain} leave us with
the leading term $n\log(d/e)$ (see the derivation of~\eqref{eqn:layerentropy}) and, up to the errors just mentioned,
the additional contribution
\[
    H(\mathcal P)+
    \mathbb E\sum_{x,i}
    \mathbf 1_{\{x\in S_i\}}\log( R_i(x)/d).
\]

Suppose for a moment that the degrees $R_i(x)$ were deterministic. Taking expectations of the indicators and \eqref{eq:overview-partition} gives
$$
H(\mathcal P)+\sum_{x,i}p_i(x)\log\frac{R_i(x)}d
\le \sum_{x,i}p_i(x)\log\frac{R_i(x)}{dp_i(x)}.
$$
\emph{Gibbs' inequality} (\cref{lem:gibbs}), concavity of the log function and \cref{eq:overview-weight-sum} would give
\[
    \sum_{x,i} p_i(x)\log\frac{R_i(x)}{dp_i(x)}\leq \sum_x\log\frac{\sum_iR_i(x)}{d}\leq n\log\frac{\sum_{i,x}R_i(x)}{dn}\leq 0.
\]
Thus, the degree terms would pay for the partition entropy.

In general, the degrees $R_i(x)$ depend on $\Phi$. We could first reveal information determining these degrees and apply the preceding argument conditionally, but revealing this information has an entropy cost. For each layer, the degrees are determined by the numbers of
children whose parents are mapped to each host vertex; for the
core, they are determined by its image set $S_0$. If $C$ is a variable recording all
this information, then conditional on $C$, each $R_i(x)$ is fixed, so the preceding argument applies with $p_i(x)=\mathbb P(x\in S_i\mid C)$. However, the additional cost $H(C)$ may be too large.

\subsection{A sparse fingerprint}\label{sec:fingerprint}
We instead estimate these degrees using a \textit{sparse sample} of the counts. Independently of $\Phi$, mark each vertex of $T$ independently with probability $\theta$. For $i\ge1$, let $Z_i(x)$ count the marked vertices of $L_i$ whose parents map to $x$, and for $i=0$, let $Z_0(x)$ indicate whether $x$ is the image of a marked core vertex. Write $Z=(Z_i(x))_{i,x}$. We view $Z$ as a randomised \emph{fingerprint} of the embedding:
it retains useful information about the degrees at a small
entropy cost. The use of a sparse random sample to record
information about a much larger object is reminiscent of
the container constructions of Saxton and
Thomason~\cite{SaxtonThomasonSimple}.

The fingerprint $Z$ has three properties that are crucial for the argument. Denote the \textit{subsampled variant} of $R_i(x)$ by  $N_i(x)$ so that, conditional on $\Phi$, $N_i(x) \sim \Bin (R_i(x), \theta)$. Firstly, the expected logarithm of $N_i(x)$ can be directly related to $\log R_i(x)$ via~\cref{cla:binomial}, making $N_i(x)$ a useful estimate for $R_i(x)$ in the context of entropy. Secondly, $Z$ determines every $N_i(x)$. Thirdly (see below), the entropy $H(Z)$ is of order $o_d(n)$.

We bound the cost of the partition by first revealing the fingerprint:
\[
    H(\mathcal P)\leq H(Z)+H(\mathcal P\mid Z).
\]
Conditional on $Z$, the degree estimates $N_i(x)$ are fixed, so we can repeat
the preceding cancellation using the conditional probabilities
$\mathbb P(x\in S_i\mid Z)$.
The sampled degrees have total expectation at most $\theta dn$,
so the final simplification using concavity leaves essentially a lower-order correction term introduced
by the binomial estimate.
Thus the degree terms cancel with $H(\mathcal P\mid Z)$ up to this
correction, leaving the information cost $H(Z)$.

The fingerprint $Z$ has at most $nq$ nonnegative integer coordinates, with total mean $\theta n$.  \cref{lem:maxentropy} is a mean-dependent upper bound on $H(Z_i(x))$, which together with subadditivity and concavity gives
\[H(Z)\leq \theta n\log(4q/\theta).\]
Choosing $q$ and $\theta$ appropriately makes this cost, the sampling correction and the core embedding error all $o_d(1)n$.

\section{Auxiliary lemmas}
    \label{sec:auxiliary}
We prove the auxiliary results used in the overview, beginning with the bound for trees with few leaves.

\begin{lemma}\label{lem:core-embeddings}
For an integer $d\ge2$, let $L_0$ be a tree with $s+1$ leaves,
where $s\ge1$, and let $\Gamma$ be a graph on the same number
of vertices with $\Delta(\Gamma)\le d$. Then we have
\[
 \log\inj(L_0,\Gamma)
 \le |L_0|\log(d/e)
 +\sum_{x\in V(\Gamma)}\log\frac{r(x)+\theta^{-1}}{d}
 +\varepsilon_0 s,
\]
where $r(x)=d_\Gamma(x)$, $\theta\in(0,1)$ is any real number
satisfying $3\log(d+1)+e\le\theta^{-1}$, and $\varepsilon_0=
 \log(2e^3d|L_0|^2\theta/s^2)$ is an error term.
\end{lemma}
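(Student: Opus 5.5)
The plan is to reduce to Br\'egman's inequality via a ``first-child'' matching, exactly as in the path case of \cref{sec:overview}. Root $L_0$ at one of its leaves $\rho$ and, for every non-leaf vertex $v$, fix one of its children as its \emph{first child}; call every other child an \emph{extra child}. If $c_v$ denotes the number of children of $v$, then $\sum_v c_v=|L_0|-1$. There are $|L_0|-s$ non-leaf vertices, since the $s$ leaves other than $\rho$ are the only childless vertices. Hence the number of extra children is $\sum_v (c_v-1)=s-1$.

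Given an embedding $\phi$, define $f_\phi\colon \phi(\text{non-leaves})\to V(\Gamma)$ by $\phi(v)\mapsto\phi(\text{first child of }v)$. This is a perfect matching in the bipartite graph $\Sigma$ with the following vertex classes and edges:
\begin{itemize}
\item $A=V(\Gamma)\setminus\phi(\{\text{leaves}\ne\rho\})$;
\item $B=V(\Gamma)\setminus\phi(\{\rho\}\cup\{\text{extra children}\})$;
\item $a\in A$ is joined to $b\in B$ whenever $ab\in E(\Gamma)$.
\end{itemize}
Both classes have size $|L_0|-s$. The embedding $\phi$ is recovered from three pieces of data: the matching $f_\phi$, the image $\phi(\rho)$, and the images $\phi(u)$ of the $s-1$ extra children $u$, each attached to its labelled position. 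To see this, reconstruct $\phi$ top-down from $\rho$: the image of a first child is $f_\phi$ applied to the image of its parent, and the image of an extra child is read off from the recorded data. The set of leaf images is determined once everything is reconstructed. For the purpose of fixing $A$, however, I will record it explicitly as an $s$-subset. The number of choices for this auxiliary data is therefore at most
\[
n\cdot n^{s-1}\binom{n}{s}\le n^{s}(en/s)^s,
\]
where $n=|L_0|$, and this is at most $(en^2/s)^s$.

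For fixed auxiliary data, I apply Br\'egman's inequality (\cref{thm:bregman}) with the product taken over the class $B$. Each $x\in B$ has degree at most $r(x)=d_\Gamma(x)$ in $\Sigma$, so the number of matchings is at most $\prod_{x\in B}(r(x)!)^{1/r(x)}$. The key numerical step is the bound
\[
(r!)^{1/r}\le \frac{r+\theta^{-1}}{e}\qquad\text{for all } 0\le r\le d .
\]
I expect this to follow from Stirling's formula, $r!\le e\sqrt r\,(r/e)^r$, giving $(r!)^{1/r}\le \frac re\,(e\sqrt r)^{1/r}$; one then checks that $\frac re\big((e\sqrt r)^{1/r}-1\big)\lesssim \log(e\sqrt r)/e$ lies below $\theta^{-1}/e$. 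The hypothesis $\theta^{-1}\ge 3\log(d+1)+e$ is presumably tailored exactly to this, with the small cases $r\in\{0,1\}$ handled by the additive $e$. With this bound, the contribution of $B$ is
\[
|B|\log(d/e)+\sum_{x\in B}\log\frac{r(x)+\theta^{-1}}{d}.
\]

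It remains to pass from the sum over $B$ to the sum over all of $V(\Gamma)$, as the statement requires. There are $|V(\Gamma)\setminus B|=s$ missing vertices. For each of them, adding the term $\log(d/e)+\log\frac{r(x)+\theta^{-1}}{d}$ costs at most $\log(d/e)-\log\frac{\theta^{-1}}{d}$, because $r(x)\ge0$. This equals $\log(\theta d^2/e)$, and I must double-check this bookkeeping against the constant in $\varepsilon_0$, which has only one factor of $d$. I suspect the right normalisation is to compare with $\log\frac{\theta^{-1}}{e}$ directly, giving a cost of $\log(e\theta\cdot\ldots)$ per missing vertex. Collecting terms, the count of auxiliary data contributes $s\log(en^2/s)$ and the missing vertices contribute $s\log(\theta d\cdot O(1))$. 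Together these give $s\log(C\,d\,n^2\theta/s)$, which is in the shape of $\varepsilon_0 s$.

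The main obstacle is pinning down the constants, namely the exact factor $2e^3/s^2$ rather than my cruder $1/s$. I would first try to sharpen the count by choosing the leaf image set and the extra-child images jointly via $\binom{n}{s}\binom{n}{s-1}(s-1)!$-type counting, which should yield the $s^2$ in the denominator, and then verify the Stirling inequality at the boundary case $r=d$.
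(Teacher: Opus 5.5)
\textbf{There is a genuine gap.} Your architecture is the right one: the first-child matching, Br\'egman on a fixed bipartite graph, and paying $(e\theta)^s$ to extend the product to all of $V(\Gamma)$. But the gap you flag is not a matter of constants.

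First, a sign slip: each missing vertex costs $-\log\frac{r(x)+\theta^{-1}}{e}\le\log(e\theta)$, with no factor of $d$. With that corrected, your argument gives
\[
\inj(L_0,\Gamma)\le\Bigl(\frac{e^2n^2\theta}{s}\Bigr)^s\prod_{x\in V(\Gamma)}\frac{r(x)+\theta^{-1}}{e}.
\]
This differs from the claimed bound by the factor $(s/(2ed))^s$, so it implies the lemma only when $s\le 2ed$. The loss is substantial. Take $s$ of order $|L_0|/q$ with $\theta=1/q$, the regime used in \cref{sec:proof}. Your error term $s\log(e^2|L_0|^2\theta/s)$ is then of order $\frac{|L_0|}{q}\log|L_0|$, which is not linear in $|L_0|$.

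The culprit is the factor $n\cdot n^{s-1}$ for the labelled images of $\rho$ and the extra children. Your proposed repair does not remove it: $\binom{n}{s-1}(s-1)!=n!/(n-s+1)!$ is still about $n^{s-1}$. Any labelled choice of extra-child images made before the matching costs roughly $n$ per child.

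\textbf{The missing idea.} An extra child's image must be a neighbour of its parent's image, so it has at most $d$ options. You can only exploit this once $f_\phi$ is known, but your $\Sigma$ needs $B$ in advance. You can decouple the two as follows.
\begin{enumerate}[(i)]
\item Record $W=V(\Gamma)\setminus B$ only as an unordered $s$-set. This costs $\binom ns$ choices, alongside $\binom ns$ for the leaf images.
\item Bound the matchings of $\Sigma$ by Br\'egman exactly as you do.
\item Only then, reconstructing top-down, choose $\phi(\rho)\in W$ (at most $s$ ways) and each extra child's image in $W\cap N_\Gamma(\text{image of its parent})$ (at most $d$ ways).
\end{enumerate}
Using $sd^{s-1}\le 2^sd^s$, this gives $\binom ns^2 s\,d^{s-1}(e\theta)^s\le(2e^3dn^2\theta/s^2)^s$, which is exactly the stated constant.

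The paper reaches the same count differently. It fixes only the leaf-image set $U$, takes the right class to be all of $V(\Gamma)$, and adds $s$ universal dummy vertices to the left class. Every $f$ then extends to exactly $s!$ perfect matchings. Br\'egman, applied on the left side, contributes $(n!)^{s/n}/s!\le(en/s)^s$ in place of your second binomial coefficient. The root and extra-child images are chosen afterwards, in at most $s\,d^{s-1}$ ways.
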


\begin{proof}
Write $m=|L_0|$.  We may assume that $\inj(L_0,\Gamma)>0$. Root $L_0$ at a leaf, orient its edges away
from the root, and retain one fixed edge to a child at every vertex that has children. Each vertex now has indegree and outdegree at most one, so the retained edges form directed paths,
possibly consisting of single vertices. Their terminal vertices are precisely the $s$ non-root leaves, so there are $s$ paths. They contain $m-s$ edges in total, and hence $s-1$ edges were deleted.

Every embedding $\Phi$ induces an injection
$f:V(\Gamma)\setminus U\to V(\Gamma)$, defined by
$f(\Phi(v))=\Phi(w)$ on each retained edge $v\to w$,
satisfying $f(x)\sim_\Gamma x$, where $U$ is the set of images
of the $s$ non-root leaves. For each such $f$, there are at
most $sd^{s-1}$ corresponding embeddings: the root image has at most $s$ choices outside the image of $f$, and traversing the rooted tree, each retained edge determines the next image through $f$, while each deleted edge gives at most $d$ choices.

Fix $U\subseteq V(\Gamma)$ of size $s$. Form a bipartite
graph whose left class consists of $V(\Gamma)\setminus U$ together with
$s$ additional labelled vertices, and whose right class is a copy of $V(\Gamma)$. Ordinary vertices inherit their edges from
$\Gamma$, while each additional vertex is adjacent to all of
$V(\Gamma)$. Every such $f$ extends to exactly $s!$ perfect
matchings. Thus Br\'egman's inequality bounds the number
of such maps $f$ by
\[
 \frac{(m!)^{s/m}}{s!}
 \prod_{x\in V(\Gamma)\setminus U}(r(x)!)^{1/r(x)}.
\]
By Stirling's estimate,
$(r!)^{1/r}\le(r+3\log(r+1))/e
\le(r+\theta^{-1})/e$ for $1\le r\le d$.
For $x\in U$ we have
$(r(x)+\theta^{-1})/e\ge 1/(e\theta)$,
so summing over $U$, we obtain
\[
 \inj(L_0,\Gamma)
 \le sd^{s-1}\binom ms\frac{(m!)^{s/m}}{s!}
 (e\theta)^s
 \prod_{x\in V(\Gamma)}\frac{r(x)+\theta^{-1}}{e}.
\]
Using $\binom ms\le(em/s)^s$, $s!\ge(s/e)^s$,
$(m!)^{s/m}\le m^s$, and $s/d\le2^s$, it follows that
\[
 \inj(L_0,\Gamma)
 \le \left(\frac{2e^3dm^2\theta}{s^2}\right)^s
 \prod_{x\in V(\Gamma)}\frac{r(x)+\theta^{-1}}{e}.
\]
Taking logarithms completes the proof.
\end{proof}

We next record the auxiliary inequalities used in the proof. They are standard, but we include the short proofs for completeness. We use the following form of Gibbs' inequality.

\begin{lemma}[Gibbs' inequality]\label{lem:gibbs}
Let $(p_i)$ and $(q_i)$ be nonnegative sequences with
$P=\sum_i p_i\in(0,\infty)$ and
$Q=\sum_i q_i\in(0,\infty)$. Then
\[
    \sum_i p_i\log\frac{q_i}{p_i}
    \le P\log\frac{Q}{P}.
\]
Terms with $p_i=0$ are zero; if $p_i>0=q_i$ for some $i$,
the left-hand side is $-\infty$.
\end{lemma}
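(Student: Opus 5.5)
The plan is to derive the inequality from concavity of the logarithm via Jensen's inequality, after first disposing of the degenerate cases. If some index has $p_i>0$ and $q_i=0$, the left-hand side is $-\infty$ by the stated convention, and there is nothing to prove. Terms with $p_i=0$ contribute zero. We may therefore restrict the sum to the index set $I=\{i: p_i>0\}$, on which every $q_i>0$.

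On $I$, define the probability weights $w_i=p_i/P$. These are positive and sum to $1$, since $\sum_{i\in I}p_i=P$. The left-hand side then becomes
\[
P\sum_{i\in I} w_i\log\frac{q_i}{p_i}.
\]
Because $\log$ is concave, Jensen's inequality gives
\[
\sum_{i\in I} w_i\log\frac{q_i}{p_i}\le \log\Bigl(\sum_{i\in I} w_i\frac{q_i}{p_i}\Bigr)=\log\Bigl(\frac1P\sum_{i\in I}q_i\Bigr).
\]

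Next, since the $q_i$ are nonnegative, $\sum_{i\in I}q_i\le Q$, and $\log$ is increasing. Hence the right-hand side above is at most $\log(Q/P)$. Multiplying through by $P>0$ yields the claimed bound $P\log(Q/P)$.

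If the index set is infinite, the same argument applies, with Jensen's inequality used for a discrete probability distribution. Alternatively, one can apply the finite case to truncations and pass to the limit. The only real point of care is the bookkeeping for zero terms; otherwise the argument is a direct application of Jensen's inequality.
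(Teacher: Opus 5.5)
Your proposal is correct and is essentially the paper's argument: the paper applies Jensen's inequality to $\log$ with weights $p_i/P$ over the indices with $p_i>0$, exactly as you do. Your extra step, using $\sum_{i\in I}q_i\le Q$ and the monotonicity of $\log$, is the detail the paper's one-line proof leaves implicit.
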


\begin{proof}
Apply Jensen's inequality to the $\log$ function with weights $p_i/P$
over the indices with $p_i>0$.
\end{proof}

The following estimate relates $\log R$ to the expected logarithm of a binomial subsample.
\begin{lemma}\label{cla:binomial}
Let $N\sim\Bin(R,\theta)$, where $R$ is a nonnegative integer
and $0<\theta<1$. Then
\[
    \log\left(R+\theta^{-1}\right)
    \le\mathbb E\log\frac{N+2}{\theta}.
\]
\end{lemma}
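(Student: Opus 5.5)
The plan is to reduce the statement to a lower bound on $\mathbb E\log(N+2)$. After multiplying inside the logarithm by $\theta$, the claim reads
\[
\log(\theta R+1)\le \mathbb E\log(N+2).
\]
The function $\log(\cdot+2)$ is concave, so Jensen's inequality points the wrong way. Instead I will use the \emph{reverse} direction coming from convexity of $y\mapsto -\log y$ applied to $1/(N+2)$:
\[
\mathbb E\log(N+2)=\mathbb E\bigl[-\log\tfrac{1}{N+2}\bigr]\ge -\log\mathbb E\Bigl[\frac{1}{N+2}\Bigr].
\]
It therefore suffices to prove the harmonic-moment estimate
\[
\mathbb E\Bigl[\frac1{N+2}\Bigr]\le\frac{1}{\theta R+1}.
\]

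For this estimate I will use the integral representation $\frac1{N+2}=\int_0^1 t^{N+1}\,dt$ together with the binomial generating function $\mathbb E\,t^N=(1-\theta+\theta t)^R$. This gives
\[
\mathbb E\Bigl[\frac1{N+2}\Bigr]=\int_0^1 t\,(1-\theta+\theta t)^R\,dt.
\]
The integral can be evaluated in closed form via $u=1-\theta+\theta t$:
\[
\int_0^1 t\,(1-\theta+\theta t)^R\,dt=\frac{1}{\theta^2}\left[\frac{1-(1-\theta)^{R+2}}{R+2}-(1-\theta)\,\frac{1-(1-\theta)^{R+1}}{R+1}\right].
\]
The case $R=0$ is immediate, since then $N=0$ and the claim is $\log 1\le\log 2$. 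The case $\theta R\le 1$ is also easy, because then $\log(\theta R+1)\le\log 2\le\mathbb E\log(N+2)$. So the real content is $\theta R>1$.

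The main obstacle is verifying $\int_0^1 t(1-\theta+\theta t)^R\,dt\le 1/(\theta R+1)$ uniformly in $\theta\in(0,1)$ and $R\ge1$. The crude bound $t\le 1-\theta+\theta t$ only yields $1/(\theta(R+2))$, which is too weak when $\theta<1/2$.

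My first attempt at the sharp bound is as follows. Drop the negative terms $(1-\theta)^{R+2}$ appropriately and write the main part as
\[
\frac{1}{\theta^2}\left(\frac{1}{R+2}-\frac{1-\theta}{R+1}\right)=\frac{\theta(R+1)-1}{\theta^2(R+1)(R+2)}.
\]
I would then check $\frac{\theta(R+1)-1}{\theta^2(R+1)(R+2)}\le\frac{1}{\theta R+1}$ by cross-multiplying. This polynomial inequality in $\theta$ and $R$ should hold on the relevant range with room to spare; if necessary, I will use the extra term $(1-\theta)^{R+2}\bigl(\tfrac{1}{R+1}-\tfrac1{R+2}\bigr)\ge 0$ carefully rather than discarding it.

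If the closed-form route becomes messy, my fallback is to prove $\mathbb E[1/(N+2)]\le 1/(\theta R+1)$ by induction on $R$. Conditioning on the last Bernoulli trial gives the recursion $f_R(\theta)=(1-\theta)f_{R-1}^{(0)}+\theta f_{R-1}^{(1)}$, where $f^{(j)}$ denotes $\mathbb E[1/(N'+2+j)]$ for $N'\sim\Bin(R-1,\theta)$. I would strengthen the inductive hypothesis to the shifted statement $\mathbb E[1/(N+k)]\le 1/(\theta R+k-1)$ for $k\ge 2$.
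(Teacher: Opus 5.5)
\textbf{There is a genuine gap: the integral bound, the step carrying the content, is never established.} Your reduction coincides with the paper's. Jensen for $-\log$ reduces the claim to $\mathbb E[1/(N+2)]\le 1/(\theta R+1)$, and the generating function gives $\mathbb E[1/(N+2)]=\int_0^1 t(1-\theta+\theta t)^R\,dt$. Neither of your routes then bounds this integral from above.

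In the closed form, the two $(1-\theta)^{R+2}$ terms combine to $\theta^{-2}(1-\theta)^{R+2}\bigl(\frac{1}{R+1}-\frac{1}{R+2}\bigr)\ge 0$, as you observe yourself. So the integral equals your ``main part'' \emph{plus} a nonnegative quantity. The main part is therefore only a lower bound, and verifying main part $\le 1/(\theta R+1)$ proves nothing. The dropped term is not a small correction either. With $\theta\to0$ and $\theta R=\lambda$ fixed, the integral tends to $(\lambda-1+e^{-\lambda})/\lambda^2$. Near $\lambda=1$, the edge of your range $\theta R>1$, the discarded term is the dominant one. (Also, the main part has numerator $\theta(R+2)-1$, not $\theta(R+1)-1$.) Keeping all terms, the desired bound is equivalent to
\[
(1-\theta)^{R+2}(1+\theta R)\le (1-\theta)^2+\theta^2(R+1).
\]
This requires an ingredient your proposal never supplies, namely $(1-\theta)^R(1+\theta R)\le e^{-\theta R}(1+\theta R)\le 1$.

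The fallback induction does not close either. Set $a=\theta(R-1)+k-1$. Your strengthened hypothesis applied to $N'\sim\Bin(R-1,\theta)$ yields $\mathbb E[1/(N+k)]\le \frac{1-\theta}{a}+\frac{\theta}{a+1}$. By convexity of $1/x$, this exceeds the target $\frac{1}{a+\theta}$ by exactly $\frac{\theta(1-\theta)}{a(a+1)(a+\theta)}>0$. So the inductive step fails for every $\theta\in(0,1)$. The shifted statement is in fact true, but it follows from the integral argument below, not from this recursion.

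The idea you are missing, which is how the paper concludes, is a pointwise bound on the integrand. The inequalities $t\le e^{-(1-t)}$ and $1-\theta(1-t)\le e^{-\theta(1-t)}$ give $t(1-\theta+\theta t)^R\le e^{-(1+\theta R)(1-t)}$. Its integral over $[0,1]$ is $\frac{1-e^{-(1+\theta R)}}{1+\theta R}\le\frac{1}{1+\theta R}$. This handles all $R$ and $\theta$ at once, with no case split and no closed form.
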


\begin{proof}
Using $1-u\le e^{-u}$, we have
\[
    \mathbb E\frac1{N+2}
    =\int_0^1 s(1-\theta+\theta s)^R\,ds
    \le\int_0^1 e^{-(1+\theta R)(1-s)}\,ds
    \le\frac1{1+\theta R}.
\]
Now apply Jensen's inequality for $-\log$
and subtract $\log\theta$.
\end{proof}

The next lemma states that among nonnegative integer-valued random variables of a given mean, the geometric distribution maximises entropy.

\begin{lemma}\label{lem:maxentropy}
Let $X$ be a nonnegative integer-valued random variable
with mean $t<\infty$.
Then $H(X)\le\psi(t)$, where
$\psi(t)=(1+t)\log(1+t)-t\log t$ for $t\ge0$.
The function $\psi$ is concave, $\psi(0)=0$, and
$\psi(t)\le t\log(4/t)$ for $0<t\le1$.
\end{lemma}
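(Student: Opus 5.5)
We prove the entropy bound by comparing the law of $X$ with a geometric distribution via Gibbs' inequality (\cref{lem:gibbs}), and then verify the three analytic properties of $\psi$ directly.

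\textbf{The entropy bound.} The case $t=0$ is trivial: then $X=0$ almost surely, so $H(X)=0=\psi(0)$. For $t>0$, let $p_k=\mathbb P(X=k)$ and take the geometric weights
\[
q_k=\frac{1}{1+t}\left(\frac{t}{1+t}\right)^k,
\]
which sum to $1$ and have mean $t$. By \cref{lem:gibbs} with $P=Q=1$,
\[
H(X)=\sum_k p_k\log\frac1{p_k}\le\sum_k p_k\log\frac1{q_k}.
\]
The right-hand side equals
\[
\log(1+t)+\mathbb E[X]\log\frac{1+t}{t}=(1+t)\log(1+t)-t\log t=\psi(t).
\]
Finiteness of the mean ensures that all sums converge, so the rearrangement is legitimate.

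\textbf{Concavity.} For $t>0$,
\[
\psi'(t)=\log(1+t)-\log t,\qquad \psi''(t)=\frac{1}{1+t}-\frac1t<0,
\]
so $\psi$ is concave on $(0,\infty)$. Since $\psi$ is continuous at $0$ (as $t\log t\to0$), concavity extends to $[0,\infty)$.

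\textbf{The estimate on $(0,1]$.} Write
\[
\psi(t)=t\log\frac{1+t}{t}+\log(1+t).
\]
Using $\log(1+t)\le t$, we get
\[
\psi(t)\le t\left(\log\frac{1+t}{t}+1\right)=t\log\frac{e(1+t)}{t}.
\]
For $t\le1$ we have $e(1+t)\le 2e$. This is larger than $4$, so this route only yields $\psi(t)\le t\log(2e/t)$, which is weaker than claimed.

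We therefore argue more carefully. Set
\[
g(t)=t\log(4/t)-\psi(t)=t\log 4-(1+t)\log(1+t).
\]
Then $g(0)=0$ and $g(1)=\log4-2\log 2=0$. Moreover
\[
g''(t)=-\frac1{1+t}<0,
\]
so $g$ is concave on $[0,1]$. A concave function vanishing at both endpoints is nonnegative on the interval, hence $g\ge0$ on $[0,1]$, giving $\psi(t)\le t\log(4/t)$ for $0<t\le1$.

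The main obstacle is this last inequality: the naive estimate loses a constant inside the logarithm, and the fix is to observe that the difference $g$ is concave and vanishes at both endpoints.
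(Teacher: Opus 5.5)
Your proof is correct and follows the paper's argument: Gibbs' inequality against the geometric law of mean $t$, concavity from $\psi''<0$, and the final estimate from convexity of $(1+t)\log(1+t)$. Your concave $g$ vanishing at $0$ and $1$ is exactly the paper's chord bound $(1+t)\log(1+t)\le 2t\log 2$, and the abandoned detour via $\log(1+t)\le t$ can simply be deleted.
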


\begin{proof}
For $t>0$, apply Gibbs' inequality with $p_i=\Pr(X=i)$
and $q_i=t^i/(1+t)^{i+1}$ for $i\ge0$ to obtain
$H(X)\le-\sum_i p_i\log q_i=\psi(t)$.
The case $t=0$ is immediate.
Concavity of $\psi$ follows from $\psi''<0$.
For $t \in (0,1]$, convexity of $(1+t)\log (1+t)$ gives
$(1+t)\log(1+t)\le2t\log2$, and hence
$\psi(t)\le t\log(4/t)$.
\end{proof}

\section{Proof of the main theorem}\label{sec:proof}
\begin{proof}[Proof of \cref{thm:main}] We may assume $d$ is sufficiently large.
Indeed, for bounded $d$, the result follows form the trivial greedy bound $\inj(T,G)\leq nd^{n-1}$ after increasing $C$.
Choose an integer $q\geq 2$ to be specified later, satisfying $3\log(d^2+1)+e\le q$, and set $\theta = 1/q$. We may assume that $\inj(T,G)>0$, and hence $\Delta(T)\le d$.

We iteratively remove all leaves of our tree, until we have either reached a tree with at most $2$ vertices, or have removed $q-1$ layers of leaves. Call the tree at the end of the process $L_0$, and index the removed layers backwards as $L_1,\dots,L_h$ where $h\leq q-1$. For each $v\in L_i$, $i>0$ we write $\parent(v)$ for the unique neighbour of $v$ in $L_0\cup\dots \cup L_{i-1}$. Every vertex of $L_i$ for $0<i<h$ (or leaf of $L_0$) has a child in the following layer, so
$$\text{(number of leaves of $L_0$)}\leq |L_1|\leq \dots \leq |L_h|.$$
In particular, either $|L_0|\leq 2$, or $h=q-1$ and $L_0$ has at most $n/q$ leaves.

Let $\Phi$ be a uniformly random embedding of $T$ into $G$. Then $H(\Phi)=\log(\inj(T,G))$. For $0\leq i\leq h$, let $S_i:=\{\Phi(v):v\in L_i\}$ be the set of images of the vertices in $L_i$ and write $\mathcal{P}=(S_0,\dots , S_h)$ for the partition of the vertex set of $G$. We define $\Phi_i\coloneqq \Phi|_{L_i}$ and $\Phi_{<i}\coloneqq \Phi|_{L_0\cup\cdots\cup L_{i-1}}
$.
By the chain rule for entropy, we have
$$H(\Phi)=H(\mathcal{P})+\sum_{i=0}^{h}H(\Phi_i\mid\mathcal{P},\Phi_{<i}).$$
Conditional on $\mathcal P$ and $\Phi_{<i}$, the number of possible
values of $\Phi_i$ is at most the number of extensions to $L_i$
with image set $S_i$. 

For each $x\in V(G)$, write\footnote{Note that for $i \geq 1$, $C_i(x)$ is determined by $\Phi_{<i}$, and that $C_i(x) =0$ if $x$ is assigned to a part $S_j$ with $j \geq i$.} 
\[
C_i(x)=
\begin{cases}
\bigl|\{v\in L_i:\Phi_{<i}(\parent(v))=x\}\bigr|, & i>0,\\
\mathbf{1}_{\{x\in S_0\}}, & i=0,
\end{cases}
\qquad\text{and}\qquad
R_i(x)=\sum_{y\sim x}C_i(y).
\]
Note that for $i>0$, $C_i(x)$ is the number of children to be embedded in layer $i$ for a parent mapped to $x$, and since $\Delta(T),\Delta(G)\leq d$ we have $R_i(x)\leq d^2$. We also have
$$\sum_xC_i(x)=|L_i|\text{ for all }i\geq 0,\quad \text{and}\quad \sum_{i,x}R_i(x)\leq dn\,,$$
recalling the latter inequality from~\eqref{eq:overview-weight-sum}.
For $i>0$, we define an auxiliary bipartite graph $H_i$ with vertex classes $L_i$ and $S_i$, so that
$$vx\in E(H_i) \quad \Longleftrightarrow \quad \Phi_{<i}(\parent(v))x\in E(G).$$
Note that the extensions we are looking for are in bijection with the perfect matchings of $H_i$ and for each $x\in S_i$ we have $d_{H_i}(x)=R_i(x)$. Therefore, by Brégman's inequality, the number of extensions is bounded above by
$$\prod_{x\in S_i}(R_i(x)!)^{1/R_i(x)}.$$
Using a standard Stirling's estimate and our choice of $\theta$, for any $0<r\leq d^2$ integer, we have 
$$(r!)^{1/r}\leq \frac{r+3\log(r+1)}{e}\leq \frac{r+3\log(d^2+1)}{e}\leq \frac{d}{e}\cdot \frac{r+\theta^{-1}}{d}.$$
Consequently, for every $x \in S_i$,
$
     \log \left(R_i(x)! \right)^{1/R_i(x)} \leq \log (d/e) + \log ((R_i(x)+\theta^{-1})/d).
$
The first term is the desired main contribution: after summing over all vertices, it contributes $n \log (d/e)$. We retain the second term, as it is used to compensate for the entropy of the image partition. 

Therefore, by considering the support of each conditional random variable, we obtain
\begin{equation}\label{eqn:layerentropy}
    H(\Phi_i\mid\mathcal{P},\Phi_{<i})\leq |L_i|\log\left(d/e\right)+
    \mathbb{E}\sum_{x\in S_i}\log\frac{R_i(x)+\theta^{-1}}{d}.
\end{equation}
Set $\varepsilon=q^{-1}\log(2e^3\theta dq^2)$.
If $|L_0|>2$, let $s+1$ be the number of leaves of $L_0$.
Applying \cref{lem:core-embeddings} with $\Gamma=G[S_0]$,
the error term is at most
$$
s\log\frac{2e^3d\theta |L_0|^2}{s^2}
\le \frac nq\log(2e^3\theta dq^2)
=\varepsilon n,
$$
where we used $|L_0|\le n$, $s\le n/q$ and $\theta dq\ge1$.
If $|L_0|\leq 2$, the required bound holds without an error term,
since $\theta^{-1}\ge2e$ and $\inj(L_0,G[S_0])\leq 2^{|L_0|}\leq \prod_{x\in S_0}(R_0(x)+\theta^{-1})/e.$

Thus \eqref{eqn:layerentropy} holds for $i=0$ with an additional error term
$\varepsilon n$, and the entropy decomposition gives
\begin{equation}\label{eq:startingpoint}
    H(\Phi)\leq n\log\left(d/e\right)+\eps n + H(\mathcal{P})+
    \mathbb E\sum_{i,x}\mathbf{1}_{x\in S_i}\log\frac{R_i(x)+\theta^{-1}}{d}.
\end{equation}
To analyse the last two summands, mark each vertex of $T$ independently with probability $\theta$ (independent of $\Phi$), and let $M$ be the set of marked vertices. For each $x\in V(G)$ define
\[
Z_i(x)=
\begin{cases}
\bigl|\{v\in L_i \cap M:\Phi_{<i}(\parent(v))= x\}\bigr|, & i>0,\\
\bigl|\{v\in L_0 \cap M:\Phi(v)=x\}\bigr|, & i=0,
\end{cases}
\quad\text{and}\quad
N_i(x)=\sum_{y\sim x}Z_i(y).
\]
Conditional on $\Phi$, $N_i(x)$ counts marked vertices in a fixed
set of size $R_i(x)$, so $N_i(x)\sim\Bin(R_i(x),\theta)$.
Thus $N_i(x)/\theta$ is an unbiased estimate of $R_i(x)$, with the
precise logarithmic bound given by \cref{cla:binomial}.

Writing $Z=(Z_i(x))_{i,x}$ and $p_i(x)=\mathbb{P}(x\in S_i\mid Z)$, for each $x\in V(G)$ we have $\sum_{i=0}^hp_i(x)=1$. 
By the subadditivity of conditional entropy, we obtain
\begin{equation}\label{eq:P-Zentropy}
    H(\mathcal{P})\leq H(\mathcal{P},Z)=H(Z)+H(\mathcal{P}\mid Z)\leq H(Z)-\mathbb{E}\sum_{i,x}p_i(x)\log p_i(x).
\end{equation} 

To make clear which randomness each expectation averages over,
we write $\mathbb E_\Phi$ for expectation over the embedding $\Phi$,
$\mathbb E_M$ for expectation over the markings with $\Phi$ fixed,
and $\mathbb E$ for expectation over both.

Apply~\cref{cla:binomial} conditionally on $\Phi$  for each $i$ and $x$, 
$ \log((R_i(x)+\theta^{-1})/d) \leq
\mathbb E_M\left[ \log((N_i(x)+2)/(\theta d))\right]$. 
Now average over $\Phi$ and note that 
$\mathbb E \left[ \mathbf{1}_{\{x\in S_i\}} \log (N_i(x)+2) \mid Z \right] = p_i(x) \log (N_i(x)+2)$ since $N_i(x)$ is determined by $Z$. 
Thus, for any fixed $x\in V(G)$, we have
\begin{equation}\label{eq:apply33}
    \mathbb E_\Phi\sum_{i}\mathbf{1}_{x\in S_i}\log\frac{R_i(x)+\theta^{-1}}{d}\leq \mathbb{E}_{\Phi}\sum_i
\mathbf{1}_{\{x\in S_i\}}\,
\mathbb{E}_{M}\!\left[
\log\frac{N_i(x)+2}{\theta d}
\right] = \mathbb E \sum_i p_i(x) \log\frac{N_i(x)+2}{\theta d}\,.
\end{equation}
We now combine this estimate with the entropy terms $-p_i(x)\log p_i(x)$ from~\eqref{eq:P-Zentropy}. For each $x\in V(G)$, we have $\sum_i p_i(x)=1$, so Gibbs' inequality gives
\begin{equation}\label{eq:Gibbs-application}
\sum_i p_i(x)\log\frac{N_i(x)+2}{\theta d\,p_i(x)} \leq \log\frac{\sum_i(N_i(x)+2)}{\theta d}.
\end{equation}

Taking expectations in \eqref{eq:Gibbs-application}, summing over $x\in V(G)$ and combining ~\eqref{eq:startingpoint},  ~\eqref{eq:P-Zentropy} and ~\eqref{eq:apply33}, we obtain
\[
H(\Phi)\le
n\log(d/e)+\eps n+H(Z)
+\sum_x\mathbb E\log\frac{\sum_i(N_i(x)+2)}{\theta d}.
\]
Since $\mathbb E\sum_{i,x}N_i(x) =\theta\,\mathbb E\sum_{i,x}R_i(x) \le\theta dn$ and $h+1\le q$, applying Jensen's inequality over the vertices and the randomness, gives
$$H(\Phi)\leq n\log(d/e)+\eps n+H(Z)+n\log\left(1+\frac{2q}{\theta d}\right).$$

Each marked vertex contributes to exactly one coordinate of $Z$, so $Z$ may be viewed as an $nq$-coordinate integer sequence with total mean $\theta n$ (after padding with zeros). By subadditivity, \cref{lem:maxentropy} and concavity of $\psi$, we get
$$H(Z)\leq \sum_{i,x}H(Z_i(x))\leq \sum_{i,x}\psi(\mathbb{E}Z_i(x))\leq nq\psi(\theta/q).$$
Using the second part of~\cref{lem:maxentropy} and $\theta/q<1$, we conclude that $H(Z) \leq \theta n \log (4q / \theta)$.

Combining these bounds gives 
$$H(\Phi)\leq n\log\left(\frac de\right)+\theta n\log \frac{4q}{\theta}+n\log\left(1+\frac{2q}{\theta d}\right)+\frac{n}{q}\log(2e^3\theta d q^2).$$
Take $q=\lceil(d\log d)^{1/3}\rceil$ and recall that $\theta=1/q$.
For sufficiently large $d$, these indeed satisfy $3\log(d^2+1)+e\leq q$ and give a total error of $O(n(\log^2 d/d)^{1/3})$, finishing the proof.
\end{proof}

\section{Concluding remarks}\label{sec:concluding}
We prove that the dependence on $d$ in the error term of~\Cref{thm:main} is necessary.
\begin{proposition}\label{prop:example}
    For any given $d\geq 2$, there exists $c_d>0$ such that for infinitely many $n\in \mathbb N$, there are $n$-vertex graphs $G$ with maximum degree at most $d$ and a tree $T$, such that $\inj(T,G)> (d/e)^n e^{c_d n}$.
\end{proposition}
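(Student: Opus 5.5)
The plan is to handle $d=2$ directly and to obtain all $d\ge3$ from the first moment of the number of Hamilton cycles in a random $d$-regular graph.

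For $d=2$, take $G=P_n$ and $T=P_n$. Then $\inj(T,G)=2$, while $(2/e)^n e^{c n}\to 0$ for any $c<\log(e/2)$. So any $c_2\in(0,\log(e/2))$ works for all large $n$.

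For $d\ge3$, take $T=P_n$ and let $G$ be a $d$-regular graph that has at least as many Hamilton cycles as a uniformly random one. Such a $G$ exists by averaging. Each Hamilton cycle yields $2n$ embeddings of $P_n$, so it suffices to show that the expected number of Hamilton cycles in a random $d$-regular graph (with $dn$ even) exceeds $(d/e)^n e^{c_d n}$ for large $n$.

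\textbf{First moment in the configuration model.} There are $(n-1)!/2$ cyclic orders and $(d(d-1))^n$ ways to pick the points used at each vertex. The probability that the chosen $n$ pairs all appear in a random perfect matching of $dn$ points is $M((d-2)n)/M(dn)$, where $M(2m)=(2m)!/(2^m m!)\approx\sqrt2\,(2m/e)^m$. Stirling's formula then gives the expectation
\[
  \bigl((d-1)(1-2/d)^{(d-2)/2}\bigr)^{n}e^{o(n)} .
\]
This is the classical Robinson--Wormald computation.

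\textbf{Transfer to simple graphs.} To pass to simple graphs, I would invoke the known fact (Robinson--Wormald, Janson) that for fixed $d$ this expectation changes only by a constant factor when we restrict to simple graphs. The reason is that, conditioned on containing a given Hamilton cycle, the residual pairing is simple with probability bounded away from $0$. This conditioning step is the only delicate point, and I would quote it rather than reprove it.

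\textbf{Exponential gain over $d/e$.} Define
\[
  f(d)=\log\frac{e(d-1)(1-2/d)^{(d-2)/2}}{d}
  =1+\log(1-1/d)+\tfrac{d-2}{2}\log(1-2/d).
\]
Expanding in $1/d$ gives $f(d)=\frac{1}{6d^2}+O(d^{-3})$, so $f(d)>0$ for large $d$. The finitely many small $d$ are checked numerically; for example, at $d=3$ the base is $2/\sqrt3\approx1.155>3/e\approx1.104$. To make this uniform I would prove $f(d)>0$ for all $d\ge3$ directly. Writing $u=1/d\le 1/3$, the series expansions of $\log(1-u)$ and $\log(1-2u)$ have every coefficient of $u^k$ with $k\ge2$ positive, which gives the sign.

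Setting $c_d=f(d)/2$, we get $\inj(P_n,G)\ge 2n\,e^{(f(d)-o(1))n}(d/e)^n>(d/e)^n e^{c_dn}$ for all large $n$ with $dn$ even. This gives infinitely many valid $n$.

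The main obstacle is checking the positivity of $f(d)$ uniformly and, more substantively, justifying the simple-graph expectation without a large detour.
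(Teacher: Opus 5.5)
Your proof is correct, but for $d\ge3$ it takes a genuinely different route from the paper.

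The paper takes $G=T$ to be the complete rooted $(d-1)$-ary tree, so that $\inj(T,G)=|\Aut(T)|\ge((d-1)!)^{(n-1)/(d-1)}$. Stirling's lower bound then gives $((d-1)!)^{1/(d-1)}>d/e$. This is entirely elementary, and its exponential gain $\log\bigl(e((d-1)!)^{1/(d-1)}/d\bigr)$ is of order $\log d/d$, which matches the conjectured error term $C\log d/d$ up to the constant. For $d=2$ the paper's example degenerates to your path.

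Your construction instead uses $T=P_n$ and a $d$-regular $G$ with at least the average number of Hamilton cycles. It therefore depends on the Robinson--Wormald/Janson first-moment computation, including the transfer from the pairing model to simple graphs. That transfer is a genuine theorem and you quote it correctly: the two expectations differ asymptotically by the factor $e$. Your route only gains $f(d)=\frac1{6d^2}+O(d^{-3})$, far weaker than $\log d/d$. In exchange, it proves something the paper's example does not: the $d$-dependent error is needed even when $T$ is a path and $G$ is $d$-regular. The paper's example, including its $d$-regular variant, relies on a tree $T$ with an enormous automorphism group.

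One small correction to your positivity argument. The series of $\log(1-u)$ and $\log(1-2u)$ individually have negative coefficients. What you need is the combined expansion
\[
  f(d)=\sum_{k\ge2}\frac{u^k}{k}\Bigl(\frac{2^k}{k+1}-1\Bigr),\qquad u=\frac1d<\frac12,
\]
in which the constant and linear terms cancel and every remaining coefficient is positive because $2^k>k+1$ for $k\ge2$.
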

\begin{proof}
Set $r=d-1$ and let $G=T$ be the complete rooted $r$-ary tree of height $h$, with $n$ vertices. Then $\Delta(G)\le d$, and permuting the identical branches at each of its $(n-1)/r$ internal vertices gives
\[
    \inj(T,G)=|\Aut(T)|\ge (r!)^{(n-1)/r}.
\]
By Stirling's lower bound, $b_d:=(r!)^{1/r}>d/e$ for $r\ge2$, and this also holds directly for $r=1$. Thus $\delta_d:=\log(eb_d/d)>0$, and
\[
    \inj(T,G)\ge b_d^{n-1}
    =\left(\frac de\right)^n\exp\!\left(\delta_dn-\log b_d\right)
    >\left(\frac de\right)^n e^{\delta_dn/2}
\]
for all sufficiently large $h$. Taking $c_d=\delta_d/2$ proves the claim.
\end{proof}
In fact, the same argument applies even when restricting $G$ to be $d$-regular. Indeed, join the roots of two copies of $T$ to form $T'$, then add an $r$-regular graph on its $2r^h$ leaves to obtain $G'$; since $T'$ has $2n$ vertices and $\inj(T',G')\ge(r!)^{2(n-1)/r}$, the same argument applies.

It would be interesting to determine the smallest possible function
$f(d)$ for which the error term in \cref{thm:main} can be taken to be $nf(d)$. 
We make the following conjecture:
\begin{conjecture}
    There exists an absolute constant $C>0$, such that for any $d\geq 2$ and any $n$-vertex tree $T$ and any $n$-vertex graph $G$ with $\Delta(G)\leq d$, we have 
    \[
        \inj(T,G)\leq \left(\frac{d}{e}\right)^n \exp\left(Cn\frac{\log d}{d}\right)\,.
    \]
\end{conjecture}
There is some evidence for this conjecture. For instance, it approximately matches the lower bound in~\Cref{prop:example}. Also,~\Cref{lem:core-embeddings} implies it is true for $n$-vertex trees with at most $O(n/d)$ leaves.

It would also be interesting to determine when a matching lower bound to \cref{thm:main} holds. For $G \sim G(n,p)$ and any $n$-vertex tree $T$, $\mathbb E \, \inj(T,G) = \left( \frac{pn}{e} \right)^n e^{o(n)}$, which suggests a natural direction -- for which classes of random or pseudorandom graphs $G$ one (typically) has
$$ \inj(T,G) \geq \left(\frac de\right)^n e^{-o(n)}\,$$
for all trees $T$ in a suitable family? Such a statement cannot hold for arbitrary spanning trees, e.g.~$T= K_{1,n-1}$, so bounded-degree trees are a natural first case to consider. When $T$ is a path, bounds of this form for (pseudo)random graphs follow from Hamilton-cycle counting results of Glebov and Krivelevich~\cite{glebov2013hamilton,Krivelevich2012Pseudo}. 

When $T$ is a bounded-degree tree, the existence problem is already classical and  very difficult~\cite{montgomery2019,hyde2025}, but the counting problem may be amenable e.g.~for dense host graphs. 

\section*{Acknowledgements}
We would like to thank the organisers of the \href{https://www.renyi.hu/~emlektab/main_news.html}{20th Eml\'ekt\'abla workshop}, where this project was initiated. We also thank Julian Sahasrabudhe for helpful comments on the manuscript.\\
V.B.~is supported by the European Research Council (ERC), via grant agreement ``ERMiD'' No.\ 101054936, and by the Hungarian National Research, Development and Innovation Office, Advanced grant 153378. \\
N.K.~was supported in part by the Croatian Science Foundation under the project number HRZZ-IP-2026-2389 (TEPiC).\\
Y.W.~is supported by the European Research Council (ERC), via grant agreement ``RANDSTRUCT'' No.\ 101076777.

\section*{Declaration on the usage of generative AI}
All ideas, arguments and calculations in the paper come from the authors. During the preparation of this manuscript, the authors used ChatGPT 5.6 Sol as a tool for proofreading and local editing only.


\begin{thebibliography}{10}

\bibitem{Alon}
Noga Alon.
\newblock The number of spanning trees in regular graphs.
\newblock {\em Random Structures \& Algorithms}, 1(2):175--181, 1990.

\bibitem{Alon-Friedland}
Noga Alon and Shmuel Friedland.
\newblock The maximum number of perfect matchings in graphs with a given degree sequence.
\newblock {\em Electron. J. Combin.}, 15(1):Note 13, 2, 2008.

\bibitem{BitontiMichelScott}
Veronica Bitonti, Lukas Michel, and Alex Scott.
\newblock Anticoncentration of random spanning trees in graphs with large minimum degree, 2026.
\newblock arXiv:2603.17630.

\bibitem{Bregman1973}
L.~M. Br{\`e}gman.
\newblock Some properties of nonnegative matrices and their permanents.
\newblock {\em Soviet Math. Dokl.}, 14:945--949, 1973.

\bibitem{Cayley}
Arthur Cayley.
\newblock A theorem on trees.
\newblock {\em Quart. J. Pure Appl. Math.}, 23:376--378, 1889.

\bibitem{Chase2020}
Zachary Chase.
\newblock The maximum number of triangles in a graph of given maximum degree.
\newblock {\em Advances in Combinatorics}, 2020.

\bibitem{conlon18-sidorenko}
David Conlon, Jeong~Han Kim, Choongbum Lee, and Joonkyung Lee.
\newblock Some advances on {S}idorenko's conjecture.
\newblock {\em J.\ Lond.\ Math.\ Soc.\ (2)}, 98(3):593--608, 2018.

\bibitem{Cuckler-Kahn-Entropy}
Bill Cuckler and Jeff Kahn.
\newblock Entropy bounds for perfect matchings and {H}amiltonian cycles.
\newblock {\em Combinatorica}, 29(3):327--335, 2009.

\bibitem{cutler2011entropy}
Jonathan Cutler and A.~J. Radcliffe.
\newblock An entropy proof of the {K}ahn-{L}ov\'{a}sz theorem.
\newblock {\em Electron. J. Combin.}, 18(1):Paper 10, 9, 2011.

\bibitem{ErdosRenyi1963}
Paul Erd{\H{o}}s and Alfr{\'e}d R{\'e}nyi.
\newblock On two problems of information theory.
\newblock {\em Magyar Tud. Akad. Mat. Kutat{\'o} Int. K{\"o}zl.}, 8(1--2):229--243, 1963.

\bibitem{friedland2008upper}
Shmuel Friedland.
\newblock An upper bound for the number of perfect matchings in graphs.
\newblock {\em arXiv preprint arXiv:0803.0864}, 2008.

\bibitem{GalvinEntropy}
David Galvin.
\newblock Three tutorial lectures on entropy and counting, 2014.
\newblock arXiv:1406.7872.

\bibitem{GanLohSudakov2015}
Wenying Gan, Po-Shen Loh, and Benny Sudakov.
\newblock Maximizing the number of independent sets of a fixed size.
\newblock {\em Combin. Probab. Comput.}, 24(3):521--527, 2015.

\bibitem{glebov2013hamilton}
Roman Glebov and Michael Krivelevich.
\newblock On the number of hamilton cycles in sparse random graphs.
\newblock {\em SIAM Journal on Discrete Mathematics}, 27(1):27--42, 2013.

\bibitem{hyde2025}
Joseph Hyde, Natasha Morrison, Alp M\"{u}yesser, and Mat\'{\i}as Pavez-Sign\'{e}.
\newblock Spanning trees in pseudorandom graphs via sorting networks.
\newblock {\em Proc. Amer. Math. Soc.}, 153(6):2353--2367, 2025.

\bibitem{janson1994numbers}
Svante Janson.
\newblock The numbers of spanning trees, hamilton cycles and perfect matchings in a random graph.
\newblock {\em Combinatorics, Probability and Computing}, 3(1):97--126, 1994.

\bibitem{keevash2018counting}
Peter Keevash.
\newblock Counting designs.
\newblock {\em Journal of the European Mathematical Society}, 20(4):903--927, 2018.

\bibitem{kirchhoff1847ueber}
Gustav Kirchhoff.
\newblock Ueber die aufl{\"o}sung der gleichungen, auf welche man bei der untersuchung der linearen vertheilung galvanischer str{\"o}me gef{\"u}hrt wird.
\newblock {\em Annalen der Physik}, 148(12):497--508, 1847.

\bibitem{kostochka1995number}
Alexandr~V. Kostochka.
\newblock The number of spanning trees in graphs with a given degree sequence.
\newblock {\em Random Structures \& Algorithms}, 6(2-3):269--274, 1995.

\bibitem{Krivelevich2012Pseudo}
Michael Krivelevich.
\newblock On the number of {H}amilton cycles in pseudo-random graphs.
\newblock {\em Electron. J. Combin.}, 19(1):Paper 25, 14, 2012.

\bibitem{Lee}
Hyunwoo Lee.
\newblock Anticoncentration of random spanning trees in almost regular graphs, 2026.
\newblock arXiv:2601.07740.

\bibitem{LeeFactors}
Hyunwoo Lee.
\newblock Kahn--{Lov{\'a}sz}-type inequalities for graph factors, 2026.
\newblock arXiv:2608.20303.

\bibitem{Linial-Luria-STS}
Nathan Linial and Zur Luria.
\newblock An upper bound on the number of {S}teiner triple systems.
\newblock {\em Random Structures Algorithms}, 43(4):399--406, 2013.

\bibitem{Luria-n-queens}
Zur Luria.
\newblock New bounds on the number of n-queens configurations, 2017.

\bibitem{McKay}
Brendan~D. McKay.
\newblock Spanning trees in regular graphs.
\newblock {\em European J. Combin.}, 4(2):149--160, June 1983.

\bibitem{montgomery2019}
Richard Montgomery.
\newblock Spanning trees in random graphs.
\newblock {\em Adv. Math.}, 356:106793, 92, 2019.

\bibitem{Otter}
Richard Otter.
\newblock The number of trees.
\newblock {\em Ann. of Math. (2)}, 49(3):583--599, 1948.

\bibitem{Radhakrishnan1997}
Jaikumar Radhakrishnan.
\newblock An entropy proof of {Bregman}'s theorem.
\newblock {\em J. Combin. Theory Ser. A}, 77(1):161--164, 1997.

\bibitem{SamotijEntropy}
Wojciech Samotij.
\newblock Entropy methods in combinatorics, 2026.
\newblock arXiv:2607.24414.

\bibitem{SaxtonThomasonSimple}
David Saxton and Andrew Thomason.
\newblock Simple containers for simple hypergraphs.
\newblock {\em Combin. Probab. Comput.}, 25(3):448--459, 2016.

\bibitem{Shannon1948}
Claude~Elwood Shannon.
\newblock A mathematical theory of communication.
\newblock {\em The Bell system technical journal}, 27(3):379--423, 1948.

\bibitem{sidorenko1991inequalities}
Alexander~F Sidorenko.
\newblock Inequalities for functionals generated by bipartite graphs.
\newblock {\em Diskretnaya Matematika}, 3(3):50--65, 1991.

\bibitem{Simkin-n-queen}
Michael Simkin.
\newblock The number of {$n$}-queens configurations.
\newblock {\em Adv. Math.}, 427:Paper No. 109127, 83, 2023.

\bibitem{szegedy14}
B~Szegedy.
\newblock An information theoretic approach to {S}idorenko’s conjecture.
\newblock {\em arXiv:1406.6738}, 2014.

\bibitem{Wang}
Yiting Wang.
\newblock On the number of distinct spanning trees in pseudorandom graphs, 2026.
\newblock arXiv:2605.12742.

\bibitem{Yan-Zhang}
Zhifei Yan and Lu-Ming Zhang.
\newblock An extremal theorem for non-isomorphic spanning trees, 2026.
\newblock arXiv:2609.30201.

\end{thebibliography}
\end{document}